\def \R{\mathbb{R}}
\def \H{\mathbb{H}}
\newtheorem{theorem}{Theorem}[section]
\newtheorem{lemma}[theorem]{Lemma}
\newtheorem{proposition}[theorem]{Proposition}
\theoremstyle{remark}
\newtheorem{remark}[theorem]{Remark}
\begin{document}
\title[Approximations of functions by curvatures]
{Approximations of periodic functions to $\R^n$\\ by curvatures of closed 
curves}
\author{J.~Mostovoy}
\address{CINVESTAV, Col. San Pedro Zacatenco,
M\'exico, D.F. CP 07360}
\email{jacob.mostovoy@gmail.com}
\author{R.~Sadykov}
\address{CINVESTAV, Col. San Pedro Zacatenco,
M\'exico, D.F. CP 07360}
\email{rstsdk@gmail.com}

\subjclass[2000]{Primary: 530A4; Secondary: 53C21, 53C42}
\keywords{curvatures, Frenet frame, $h$-principle}

\date{}

\begin{abstract}
We show that for any $n$ real periodic functions $f_1,\dots, f_n$ with the same period, such that $f_i>0$ for $i<n$, and a real number $\varepsilon>0$, there is a closed curve in $\R^{n+1}$ with curvatures $\kappa_1, ..., \kappa_n$ such that $\left| \kappa_i(t)-f_i(t)\right|<\varepsilon$ for all $i$ and $t$.  This neither holds for closed curves in the hyperbolic  space $\H^{n+1}$, nor for parametric families of closed curves in $\R^{n+1}$.
\end{abstract}

\maketitle

\section{Introduction}

Let $\alpha\colon S^1\to\R^{n+1}$ be a closed $C^{n+1}$-differentiable curve such that
for each $t\in S^1$ the vectors
\begin{equation}\label{eq:1}
    \{\alpha'(t), \alpha''(t), \cdots, \alpha^{n}(t)   \}
\end{equation}
are linearly independent. We shall refer to such a curve as a closed \emph{Frenet} curve. The Gram-Schmidt process turns the set of vectors (\ref{eq:1}) into
a set of orthonormal vectors $\{e_1,...,e_n\}$, which, together with a unique unit vector $e_{n+1}$,
forms a positively oriented orthonormal basis $\{e_1,...,e_{n+1}\}$ called the \emph{Frenet} frame~\cite{Kl}.
The \emph{curvatures} $\kappa_1^\alpha, \dots, \kappa_n^\alpha$ of the curve $\alpha$ at the point $t$ are defined by
induction by means of the \emph{Frenet formulae}
\[
   \frac{1}{\left|\alpha'\right|}e'_1=\kappa_1^\alpha e_2,
\]
\[
   \frac{1}{\left|\alpha'\right|}e'_{i+1}= -\kappa_i^\alpha e_i + \kappa_{i+1}^\alpha e_{i+2},  \qquad \textrm{for } i=1,...,n-1,
\]
\[
   \frac{1}{\left|\alpha'\right|}e'_{n+1}=-\kappa_{n}^\alpha e_{n}.
\]
It follows that the curvatures
$\kappa_1^\alpha , ..., \kappa_{n-1}^\alpha$ are strictly positive~\cite[Proposition 1.3.4]{Kl}. We say that the curve $\alpha$ is
\emph{twisted} if the last curvature $\kappa_n^\alpha$ is nowhere zero.

A {\em curvature-like} function is a periodic function to $(\R_{+})^{n-1}\times \R$, that is, a map $S^1\to (\R_{+})^{n-1}\times \R$. 
Every closed Frenet curve $\alpha$ gives rise to the curvature-like function
\[
   j_\alpha \colon t\mapsto ( \kappa_1^\alpha(t), \kappa_2^\alpha(t), ..., \kappa_n^\alpha(t)).
\]
A curvature-like function of the form $j_\alpha$ is called \emph{holonomic}.

Given a curvature-like function $s$ and a real number $\varepsilon>0$, we say that a closed curve $\alpha$
is a \emph{holonomic $\varepsilon$-approximation} of $s$ if $j_\alpha$ is $\varepsilon$-close to $s$, that is,
for each $t\in S^1$ we have $|s(t)-j_\alpha(t)|<\varepsilon$. Similarly, a holonomic approximation of a family $\{s_{u}\}$, where $u$ ranges over an interval $I$, is a family $\{\alpha_u\}_{u\in I}$ of closed curves such that $\alpha_u$ is a holonomic $\varepsilon$-approximation of $s_u$
for each $u\in I$.

\medskip

Our main result is the existence theorem for holonomic approximations.

\begin{theorem}\label{th:1} Every curvature-like function admits a holonomic $\varepsilon$-approximation for every $\varepsilon$. The approximating curve $S^1\to \R^{n+1}$ can be chosen so as to be an embedding.
\end{theorem}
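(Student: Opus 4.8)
The plan is to reduce the statement, via the fundamental theorem of curves, to a closing-up problem and then to solve the latter by a controllability argument. By the Frenet formulae, a choice of curvature functions $\tilde\kappa_1,\dots,\tilde\kappa_n$ on an interval together with a positive speed function $v$ determines an open Frenet arc uniquely up to an orientation-preserving isometry of $\R^{n+1}$: one integrates the linear system $F' = v\,F\,A(\tilde\kappa)$ for the frame $F\in SO(n+1)$, where $A(\tilde\kappa)$ is the skew-symmetric Frenet matrix built from $\tilde\kappa$, and then sets $\alpha' = v\,e_1$ with $e_1$ the first column of $F$. Since the curvatures are geometric quantities, independent of the parametrization, I am free to prescribe both $\tilde\kappa$ (subject to $|\tilde\kappa - s| < \varepsilon$, up to a reparametrization of the circle) and the speed $v$. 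A closed Frenet curve $\varepsilon$-approximating $s$ therefore exists precisely when I can choose such data on $[0,L]$ for some $L$ so that the two endpoints agree as elements of the isometry group $\mathrm{Iso}^+(\R^{n+1}) = SO(n+1)\ltimes\R^{n+1}$, that is, so that the total monodromy is the identity and the curve closes up in position.

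First I would produce an open arc whose curvatures reproduce $s$ exactly after a reparametrization; its two ends differ by some element $g = (\mathcal{P},\Delta p) \in SO(n+1)\ltimes\R^{n+1}$. The task becomes to modify the arc, by an $\varepsilon$-small change of its curvatures, so as to cancel $g$. To this end I would insert finitely many \emph{correction gadgets}: long sub-arcs on which the curvatures are taken to be constant vectors $\varepsilon$-close to a value $s(t^\ast)$ of $s$. This is feasible because, after reparametrization, such a gadget corresponds to an arbitrarily short parameter interval, over which $s$ is nearly constant, so its curvatures as functions of $t$ stay within $\varepsilon$ of $s$. A gadget with exactly constant curvatures is an orbit of a one-parameter subgroup of $\mathrm{Iso}^+(\R^{n+1})$; perturbing the constant curvatures within $\varepsilon$ over a long gadget lets the net displacement in $\mathrm{Iso}^+(\R^{n+1})$ vary in an open set around that orbit's endpoint.

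The key point, and the main obstacle, is to show that these perturbations suffice to realize \emph{any} prescribed small correction, so that the gadgets can be tuned to make the assembled curve close up exactly. This is a controllability statement for the Frenet system: the admissible infinitesimal controls are the curvature directions $e_i\wedge e_{i+1}\in\mathfrak{so}(n+1)$, $i=1,\dots,n$, together with the drift by the forward translation along $e_1$. These adjacent generators bracket-generate $\mathfrak{so}(n+1)$, and their brackets with the forward translation sweep out all of $\R^{n+1}$; hence the system is bracket-generating on the whole isometry group. By a Chow--Rashevskii type argument, a long gadget with curvatures confined to an $\varepsilon$-ball about a generic constant value reaches a full neighborhood of its unperturbed endpoint in $\mathrm{Iso}^+(\R^{n+1})$. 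Choosing gadgets based at points where the frame has been rotated into independent positions, I can arrange the differential of the combined period map to be surjective, and an application of the implicit function theorem (or a direct openness argument) then yields data for which the monodromy is trivial and the position closes, with all curvatures within $\varepsilon$ of $s$. The quantitative estimate, guaranteeing that the attainable set about the unperturbed endpoint contains a fixed-size neighborhood uniformly as the gadget lengthens while the perturbation stays below $\varepsilon$, is the delicate part.

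Finally, to obtain an embedding I would appeal to general position: the self-intersection locus of a generic closed curve in $\R^{n+1}$ is the zero set of the map $(t_1,t_2)\mapsto \alpha(t_1)-\alpha(t_2)$ on $(S^1\times S^1)\setminus\Delta$, which has codimension $n+1\ge 3$ in the two-dimensional domain whenever $n\ge 2$; a $C^\infty$-small perturbation, keeping the curvatures within $\varepsilon$, therefore removes all double points. The planar case $n=1$, where self-intersections are generic, requires instead a direct construction that keeps the curve simple throughout the gadget insertion.
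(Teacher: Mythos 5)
Your reduction to a closing-up problem on $\mathrm{Iso}^+(\R^{n+1})=SO(n+1)\ltimes\R^{n+1}$ is a legitimate reformulation, and your use of reparametrization invariance to hide long constant-curvature ``gadgets'' inside short parameter intervals is exactly the mechanism the paper uses. But the core of the argument --- making the curve actually close up --- has two genuine gaps. First, the monodromy $g$ of the arc that realizes $s$ exactly is a fixed and in general \emph{large} element of the isometry group, while an implicit-function or openness argument around the unperturbed gadgets only produces corrections in a small neighbourhood of the unperturbed endpoint. So before any such argument can apply you must arrange that the \emph{unperturbed} assembled configuration is already almost closed (in position and in frame), and you never say how. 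This is precisely where the paper's key input lives: a constant-curvature curve in $\R^{2k}$ is a sum $\sum_l A_l\cos(b_lt)+B_l\sin(b_lt)$ lying on a $k$-torus, hence recurrent --- it returns, together with its first $n+1$ derivatives, arbitrarily close to its initial data at arbitrarily large times $u$. After reparametrizing so that the ``exact'' arc $\gamma$ has length $<\delta$, this near-return of the helix supplies the almost-closed reference configuration, and only then does an openness argument (the evaluation map on the space of compactly supported $C^{n+1}$ perturbations of the helix is open) close the curve exactly. In $\R^{2k+1}$ recurrence fails (the constant-curvature curves drift along a cylinder), and the paper needs a further trick, replacing the helix by a nearby curve with an added large-radius circular factor $A_0r\cos t+A_1r\sin t$; your gadget scheme faces the same unbounded translational drift there and says nothing about it.

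Second, the controllability step is not a Chow--Rashevskii situation: that theorem concerns driftless symmetric systems, whereas here there is a drift (the reference rotation plus the forward translation along $e_1$) and the controls are confined to an $\varepsilon$-ball about a \emph{nonzero} value, with the positivity constraints $\kappa_i>0$ for $i<n$. Bracket generation of $\mathfrak{so}(n+1)$ by the adjacent planes $e_i\wedge e_{i+1}$ then yields only accessibility (nonempty interior of the attainable set), not that the attainable set contains a neighbourhood of the reference endpoint --- the constant-curvature trajectory could a priori be a singular point of the endpoint map, and the uniformity in the gadget length that you call ``the delicate part'' is in fact the whole content of the step. Finally, for the embedding statement your general-position argument for $n\ge 2$ matches the paper's, but you leave the planar case open; the paper disposes of it by quoting Dahlberg's converse of the four-vertex theorem, for which the approximating curve can be taken simple from the start.
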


This result is not entirely trivial as its analogues fail both for families of curvature-like functions and for approximations by curvatures of curves in the hyperbolic $(n+1)$-space.

\begin{proposition}\label{p:1} There is a family $\{s_u\}$ of curvature-like functions for $n$ odd that admits no holonomic $\varepsilon$-approximation for some $\varepsilon>0$.
\end{proposition}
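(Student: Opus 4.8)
The plan is to detect the obstruction by a homotopy-theoretic invariant of the Frenet frame. To a closed Frenet curve $\alpha\colon S^1\to\R^{n+1}$ the Frenet frame associates a loop $F_\alpha\colon S^1\to \mathrm{SO}(n+1)$, and hence a class $w(\alpha):=[F_\alpha]\in\pi_1(\mathrm{SO}(n+1))$. Since the Frenet frame depends continuously on the curve (in the $C^{n+1}$-topology, within the open set of Frenet curves), the class $w(\alpha)$ is locally constant; in particular it is constant along any continuous family $\{\alpha_u\}_{u\in I}$ of closed Frenet curves. For $n\ge 2$ one has $\pi_1(\mathrm{SO}(n+1))\cong\mathbb{Z}/2$, and for $n=1$ it is $\mathbb{Z}$ (the turning number); either way $w$ is a nontrivial deformation invariant. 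The point to exploit is that $w$ is a property of the curve and not of its curvature-like function alone: because the arc-length reparametrization (the speed $|\alpha'|$) is free, two curves with nearly the same $j_\alpha$ may carry different values of $w$. This freedom is exactly what Theorem~\ref{th:1} uses, so any obstruction for families must survive it.

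Building on this, I would argue by contradiction. If $\{\alpha_u\}_{u\in I}$ is a holonomic $\varepsilon$-approximation of a family $\{s_u\}$, then $u\mapsto w(\alpha_u)$ is constant. The strategy is therefore to construct a family (essentially a loop) $\{s_u\}$ along which every sufficiently good approximation is forced to change its frame class. Concretely, I would start from a single holonomic profile $s_\ast$ realized by a curve $\beta$, and run it through a deformation that inserts one full extra rotation of the Frenet frame and then undoes it at the level of the curvatures, so that $s_u$ returns to $s_\ast$ while the development of the frame traverses a noncontractible loop in $\mathrm{SO}(n+1)$. The monodromy of this loop on the frame classes of approximating curves is then the generator of $\pi_1(\mathrm{SO}(n+1))$: continuation of $\beta$ once around the family lands on a curve that approximates the same $s_\ast$ but has $w$ changed by the nontrivial element, contradicting constancy of $u\mapsto w(\alpha_u)$.

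The role of the parity of $n$ enters precisely here. For $n$ odd, $n+1$ is even, $-\mathrm{Id}\in\mathrm{SO}(n+1)$, and the extra $2\pi$-spin produced by the construction represents the nontrivial element of $\pi_1(\mathrm{SO}(n+1))$, so it cannot be cancelled; for $n$ even the corresponding deformation is contractible and can be absorbed, which is why no obstruction of this kind is available. I would make the construction explicit using a curve close to lying in a $2$-plane together with a localized gadget realizing the generator, and verify the parity statement by computing the class of the monodromy loop directly from the Frenet ODE $\tfrac{1}{|\alpha'|}F'=F A_\kappa$, where $A_\kappa$ is the tridiagonal skew-symmetric curvature matrix.

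The main obstacle I anticipate is not the construction of the loop but the rigidity statement: because we only have an $\varepsilon$-approximation and the speed is unconstrained, I must show that for $\varepsilon$ small the frame class of every approximating curve is genuinely pinned down by the family, so that the monodromy is well defined and equals the generator, rather than being washed out by the reparametrization freedom. Establishing this robustness of $w$ under $C^0$-small, speed-arbitrary perturbations of the curvatures, together with the verification that the computed monodromy is the nontrivial class exactly when $n$ is odd, is the technical heart of the argument.
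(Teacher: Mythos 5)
Your approach is genuinely different from the paper's, which deduces the proposition from the Shapiro--Shapiro theorem: for $n$ odd the space of right-oriented twisted curves splits into two disjoint \emph{open} subspaces $\mathbf{ND}\sqcup\mathbf{NC}$ containing curves $\alpha,\beta$ whose curvature data $j_\alpha,j_\beta$ are joined by a path of curvature-like functions with positive last component; an approximating family for that path would be a path of right-oriented curves from $\mathbf{ND}$ to $\mathbf{NC}$, which is impossible. Your replacement invariant does not produce a contradiction, and the step you yourself flag as ``the technical heart'' is not a technicality but the place where the argument fails. Since $w(\alpha_u)\in\pi_1(\mathrm{SO}(n+1))$ is locally constant, it automatically satisfies $w(\alpha_0)=w(\alpha_1)$ for \emph{any} continuous approximating family; to get a contradiction you must show that your family $\{s_u\}$ \emph{forces} $w(\alpha_0)\neq w(\alpha_1)$, and there is no mechanism for this. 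The frame class is computed from the Frenet ODE using both the curvatures and the speed $|\alpha'|$, and the speed is completely unconstrained by $s_u$; worse, the space of curvature-like functions $C^0(S^1,(\R_{+})^{n-1}\times\R)$ is convex, hence simply connected, so your loop $\{s_u\}$ bounds a disk and any honestly defined ``monodromy'' over it is necessarily trivial. Concretely, the rigidity you hope for fails already over a single point of the base: a suitable constant curvature-like function with $k_n\neq 0$ is realized exactly (not merely $\varepsilon$-approximately) both by a closed helix with $w\neq 0$ and by the same helix traversed twice, for which $w=2\cdot w=0$ in $\mathbb{Z}/2$. The frame class of an approximating curve is therefore not pinned down by $s_\ast$, and no loop through $s_\ast$ can change that.

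A secondary issue is the parity discussion: $\pi_1(\mathrm{SO}(n+1))\cong\mathbb{Z}/2$ for all $n\geq 2$ regardless of the parity of $n$, and the presence of $-\mathrm{Id}$ in $\mathrm{SO}(n+1)$ is not what makes a $2\pi$-spin noncontractible. Parity enters the true obstruction elsewhere: by Little and Shapiro--Shapiro, for $n$ odd the space of nondegenerate closed curves has \emph{more} path components than the Frenet-frame class can detect, and it is precisely such an ``invisible'' extra component, together with the openness of the decomposition $\mathbf{ND}\sqcup\mathbf{NC}$ (which pins $\gamma_0$ and $\gamma_1$ into the two pieces once $\varepsilon$ is small), that yields the contradiction. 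Any invariant that, like $w$, you hope to control through a loop in a convex function space cannot serve as the obstruction; the $\pi_0$-level input from \cite{SS} (or some equally nontrivial substitute) is indispensable here.
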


Proposition~\ref{p:1} will be deduced below from the Shapiro-Shapiro theorem~\cite{SS}.

For the next proposition note that all the above definitions extend to the case of curves in a general Riemannian manifold (see, for instance, \cite{EC}).

\begin{proposition} There exists a curvature-like function that for sufficiently small $\varepsilon$ admits no holonomic $\varepsilon$-approximation by a closed Frenet curve in $\H^{n+1}$.
\end{proposition}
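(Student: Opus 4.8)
The plan is to isolate a universal lower bound on the first curvature of any closed Frenet curve in $\H^{n+1}$, and then to exhibit a curvature-like function whose first component lies strictly below that bound. The geometric content of the proposition is thus concentrated in a single comparison estimate; the rest is a formal contradiction.

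First I would prove the following fact: every closed Frenet curve $\alpha$ in $\H^{n+1}$ satisfies $\kappa_1^\alpha(t_0)>1$ for at least one $t_0\in S^1$. Since $\alpha(S^1)$ is compact, I fix a point $p\in\H^{n+1}$, write $r(x)=d(p,x)$ (smooth away from $p$, as $\H^{n+1}$ has no cut locus), and consider $\rho(t)=r(\alpha(t))$. This attains a maximum at some $t_0$, where $\rho'(t_0)=0$ and $\rho''(t_0)\le 0$. Parametrizing by arc length, the first condition says that $\alpha'(t_0)$ is tangent to the geodesic sphere $\Sigma_R=\{r=R\}$ of radius $R=r(\alpha(t_0))>0$, while the second, expanded through the covariant derivative, reads
\[
0\ \ge\ \mathrm{Hess}\,r\,(\alpha',\alpha')+\langle\nabla r,\nabla_{\alpha'}\alpha'\rangle .
\]

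Next I would invoke the Hessian comparison for the distance function in the space form of curvature $-1$: for a vector $v$ tangent to $\Sigma_R$ one has $\mathrm{Hess}\,r\,(v,v)=\coth(R)\,|v|^2$, this being exactly the second fundamental form of the geodesic sphere. Using the first Frenet formula $\nabla_{\alpha'}\alpha'=\kappa_1^\alpha e_2$ together with $|\nabla r|=1$, the displayed inequality becomes $\coth(R)+\kappa_1^\alpha(t_0)\,\langle\nabla r,e_2\rangle\le 0$. Since $|\langle\nabla r,e_2\rangle|\le 1$, this forces $\kappa_1^\alpha(t_0)\ge\coth(R)>1$. With the bound in hand the proposition follows at once: take any curvature-like function $s=(f_1,\dots,f_n)$ with $f_1\equiv 1/2$ and the remaining components equal to arbitrary positive constants, and set $\varepsilon=1/4$. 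A holonomic $\varepsilon$-approximation $\alpha$ would satisfy $\kappa_1^\alpha(t)<f_1(t)+\varepsilon=3/4<1$ for every $t$, contradicting the existence of $t_0$ with $\kappa_1^\alpha(t_0)>1$.

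I expect the main obstacle to be the rigorous justification of the comparison step: one must verify that at the maximum of $\rho$ the tangent vector $\alpha'(t_0)$ indeed lies in the tangent space to $\Sigma_R$, and that $\coth R$ is the correct principal curvature of geodesic spheres in $\H^{n+1}$, with $\mathrm{Hess}\,r$ correctly identified with the sphere's second fundamental form. Both are standard consequences of Jacobi field computations in constant curvature, but the signs and the tangency must be handled with care. Everything else in the argument is elementary.
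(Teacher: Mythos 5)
Your argument is correct, and its overall shape is exactly that of the paper's proof: establish that every closed Frenet curve in $\H^{n+1}$ has a point where $\kappa_1>1$ (or $\ge 1$), then choose a curvature-like function with first component identically below that threshold and take $\varepsilon$ small enough that no approximating curve can clear the bound. The only difference is in how the curvature bound is obtained: the paper simply cites it from the literature (Czarnecki, ``Hadamard foliations of $\H^n$'', Corollary~1.4), whereas you prove it from scratch by the standard maximum-principle argument --- take the farthest point of the curve from a fixed $p$, note that $\alpha'(t_0)$ is tangent to the geodesic sphere of radius $R$ there, and combine $\rho''(t_0)\le 0$ with the Hessian comparison $\mathrm{Hess}\,r(v,v)=\coth(R)\,|v|^2$ and the first Frenet equation to get $\kappa_1(t_0)\ge\coth(R)>1$. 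Your computation is sound (the tangency at the maximum and the identification of $\coth R$ as the principal curvature of geodesic spheres in curvature $-1$ are both standard, and $R>0$ since a Frenet curve is nonconstant), so what your version buys is self-containedness at the cost of a page of Riemannian comparison geometry; the numerical conclusion with $f_1\equiv 1/2$ and $\varepsilon=1/4$ matches the paper's choice of ``first component less than $1/2$, $\varepsilon<1/2$''.
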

\begin{proof} It is known that a closed curve in the hyperbolic space of any dimension has curvature $\ge 1$ at some of its points (see, for instance,
\cite[Corollary 1.4]{MC}). Consequently, if a curvature-like function is chosen so that its component corresponding to
$\kappa_1$ is everywhere less than $1/2$, then for $\varepsilon<1/2$ it admits no holonomic
$\varepsilon$-approximation.
\end{proof}

\begin{remark} Our main theorem is obviously related to the Holonomic Approximation Theorem of Eliashberg and Mishachev~\cite{EM}. In fact, a holonomic approximation, that we are interested in, is a solution of an appropriate differential relation \cite{Gr}. We have shown that such a differential relation always has a solution. M.~Ghomi proved~\cite{MG} another version of the Holonomic Approximation Theorem and established the $h$-principle for curves and knots of constant curvature. Our results are different, as well as the proofs.
\end{remark}

\section{Proof of Proposition~\ref{p:1}}

A twisted curve in $\R^{n+1}$ is said to be right-oriented if $\kappa_n>0$. By the Shapiro-Shapiro theorem~\cite{SS},  the number of path components of the space of right-oriented curves in $\R^{n+1}$ for $n$ odd is at least $3$.  Also, Shapiro and Shapiro proved that in this case the space of right-oriented curves is the union of two disjoint open subspaces $\mathbf{ND}$ and $\mathbf{NC}$ such that for some curves $\alpha\in \mathbf{ND}$ and $\beta\in \mathbf{NC}$ the functions $s_0=j_\alpha$ and $s_1=j_\beta$ are homotopic through curvature-like functions $\{s_u\}_{u\in [0,1]}$ with positive last
component of $s_u$ for all $u$.

Assume that for all $\varepsilon>0$, there is a holonomic $\varepsilon$-approximation  $\{\gamma_u\}$ of $\{s_u\}$. Since $\mathbf{ND}$ and $\mathbf{NC}$ are open, for sufficiently small $\varepsilon$, the curve $\gamma_0$ is in $\mathbf{ND}$, and the curve $\gamma_1$ is in $\mathbf{NC}$. Also, for sufficiently small $\varepsilon$,  the curve $\gamma_u$ is right-oriented for all $u$. Thus, for sufficiently small $\varepsilon$, the family $\{\gamma_u\}$ is a homotopy of a curve in $\mathbf{ND}$ to a curve in $\mathbf{NC}$ through right-oriented curves. The existence of such a family contradicts the fact that $\mathbf{ND}$ and $\mathbf{NC}$ are disjoint.

\section{Proof of Theorem~\ref{th:1}}

The second part of the theorem, namely, that the holonomic approximation can be chosen to be an embedding, is a simple observation. For closed curves in $\R^2$ Theorem~\ref{th:1} is a consequence of the converse of the four vertex theorem~\cite{D}, and, therefore, the holonomic approximation can be chosen to be an embedded curve. For curves in $\R^{n+1}$ for $n>1$, we can choose slight, in the $C^{n+1}$ metric, perturbations of the approximating curve to get rid of the self-intersection points. Since $C^{n+1}$-close curves have close curvatures, slight perturbations of a holonomic $\frac{\varepsilon}{2}$-approximation are holonomic $\varepsilon$-approximations.

\medskip

As for the first part, the idea of the proof is as follows.

For a non-closed curve the holonomic approximation problem is trivial: one can always find a curve with any prescribed curvatures.
Using the fact that the curvatures of a curve do not depend on the parametrization, we re-parametrize the curvature-like function so that it is very close to a constant almost everywhere apart from a small subinterval of $S^1$. Then we find separately the desired approximation on this subinterval (by integrating the curvatures) and on its complement (using known formulae for constant curvature curves)  and notice that these approximations can be stitched together.

\medskip

Let us first prove Theorem~\ref{th:1} in the case $n+1=2k$.

\subsection{Piecewise {differentiable} holonomic $\varepsilon$-approximations}

Here we shall prove that for every curvature-like function and every $\varepsilon>0$ there exists a continuous
piecewise  differentiable holonomic $\varepsilon$-approximation
$\alpha$. In fact, the curve $\alpha$ will be constructed as a concatenation of two
 differentiable curves $\beta$ and $\gamma$.

\begin{lemma}\label{l:3}  
Let $k_1, \ldots, k_n$ be real numbers with $k_i>0$ for $i<n$ and $k_n\neq 0$. For each $\varepsilon>0$, there exists $\delta>0$
and $u>0$ with the following properties: given two points $p$ and $q$
such that $|p-q|<\delta$,
there exists a curve $\beta$ from $\beta(0)=p$ to $\beta(u)=q$ such that
\begin{itemize}
\item $|\kappa_i^\beta(t)-k_i|<\varepsilon\ $ for $i=1,...,n$ and all $t\in [0,u]$, and
\item $|\beta^{(i)}(0)-\beta^{(i)}(u)|<\varepsilon\ $ for $i=1,...,n+1$.
\end{itemize}
Furthermore, for any positive orthonormal basis of $\R^{n+1}$ the curve $\beta$ can be chosen so that its Frenet frame
at $p$ coincides with the given orthonormal basis.
\end{lemma}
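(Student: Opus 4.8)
The plan is to build $\beta$ as a perturbation of the \emph{constant-curvature} model curve with curvatures $k_1,\dots,k_n$, reducing all three requirements to the single condition that the Frenet frame of the model nearly returns to itself. Parametrize by arc length and let $A$ be the skew-symmetric $(n+1)\times(n+1)$ Frenet matrix whose super-diagonal is $k_1,\dots,k_n$. The frame $E(s)$ of the model curve $\beta_0$ with $E(0)$ equal to the prescribed orthonormal basis satisfies $E'=AE$, so $E(s)=e^{As}E(0)$, and, writing $e_1(s)$ for its first row, $\beta_0(s)=p+\int_0^s e_1(\sigma)\,d\sigma$. Since $\beta_0$ has constant curvatures exactly $k_i$, it is a nondegenerate Frenet curve, and on the compact interval $[0,u]$ the relevant Gram determinants are bounded below, so the curvatures depend in a uniformly Lipschitz way on the $C^{n+1}$-jet of the curve.

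The crux is that for $n+1=2k$ even the matrix $A$ is invertible. Indeed, $A$ is tridiagonal and skew-symmetric, so its Pfaffian is the product of its odd-indexed super-diagonal entries, $\mathrm{Pf}(A)=\pm\,k_1k_3\cdots k_n$, which is nonzero because $k_1,\dots,k_{n-1}>0$ and $k_n\neq 0$. Hence $\det A=\mathrm{Pf}(A)^2\neq 0$, the eigenvalues of $A$ are $\pm i\omega_1,\dots,\pm i\omega_k$ with every $\omega_\ell\neq 0$, and the entries of $e^{As}$ are almost-periodic with these frequencies and \emph{no constant term}. In particular $\int_0^s e^{A\sigma}\,d\sigma=A^{-1}\!\left(e^{As}-I\right)$ stays bounded rather than drifting linearly --- this is exactly where the parity of $n+1$ enters, and it is the main obstacle: in the odd-dimensional case $A$ is singular and the model curve escapes to infinity.

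I would then fix $u$ using simultaneous Diophantine approximation (Kronecker's theorem applied to the frequencies $\omega_\ell$): there exist arbitrarily large $u$ with $\|e^{Au}-I\|$ as small as we please, and I fix one for which both $|\beta_0^{(i)}(0)-\beta_0^{(i)}(u)|$ (which depends continuously on $e^{Au}$ and vanishes when $e^{Au}=I$) and $|\beta_0(u)-p|=\bigl|\,[A^{-1}(e^{Au}-I)E(0)]_1\,\bigr|$ are as small as needed; this $u$ depends only on the $k_i$ and on $\varepsilon$. Finally, given $q$ with $|p-q|<\delta$, I would correct the endpoint by setting $\beta=\beta_0+\phi\,(q-\beta_0(u))$, where $\phi\colon[0,u]\to\R$ is a fixed smooth profile with $\phi(0)=0$, $\phi(u)=1$ and $\phi^{(i)}(0)=\phi^{(i)}(u)=0$ for $1\le i\le n+1$. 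Then $\beta(0)=p$, $\beta(u)=q$, and $\beta^{(i)}$ agrees with $\beta_0^{(i)}$ at both endpoints for $i\ge 1$, so the prescribed Frenet frame at $p$ and the second bullet are inherited from $\beta_0$; and since $\|\beta-\beta_0\|_{C^{n+1}}\le\bigl(\delta+|\beta_0(u)-p|\bigr)\,\|\phi\|_{C^{n+1}}$, the Lipschitz dependence of the curvatures on the $C^{n+1}$-jet keeps the curvatures of $\beta$ within $\varepsilon$ of the constants $k_i$ once $\|e^{Au}-I\|$ and then $\delta$ are chosen small enough. All these choices are uniform in $p$ and in the frame, since varying them only moves the model rigidly, so a single pair $(\delta,u)$ works for every admissible $p,q$.
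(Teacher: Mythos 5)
Your argument for the even-dimensional case $n+1=2k$ is correct and is, at heart, the same as the paper's: both rest on the observation that the constant-curvature model curve lies on a torus and, together with its first $n+1$ derivatives, returns arbitrarily close to its initial data at arbitrarily large times $u$. Your route to this via the Frenet matrix is clean --- the Pfaffian computation $\mathrm{Pf}(A)=\pm\,k_1k_3\cdots k_n\neq 0$ correctly identifies the invertibility of $A$ as the reason the primitive $\int_0^s e^{A\sigma}\,d\sigma=A^{-1}(e^{As}-I)$ stays bounded, which is exactly why Monterde's formula for constant-curvature curves in $\R^{2k}$ has no linear term. Where you differ from the paper is in how the endpoint is forced to hit $q$ exactly: the paper perturbs the helix by functions $f_l,g_l$ supported away from $0$ and invokes openness of the evaluation map on this function space, whereas you add the explicit correction $\phi\,(q-\beta_0(u))$ with a profile whose derivatives vanish at both ends. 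Your version is more concrete and makes the preservation of the second bullet and of the frame at $p$ immediate; the only point to watch is the order of quantifiers (the admissible size of $|q-\beta_0(u)|$ depends on $\|\phi\|_{C^{n+1}}$, which depends on $u$), but this is harmless since a fixed profile rescaled to $[0,u]$ has uniformly bounded $C^{n+1}$-norm for $u\ge 1$.

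The genuine gap is the case $n+1=2k+1$, which the lemma as stated also covers and which you flag as ``the main obstacle'' without resolving. There $A$ is singular, the model curve has the form $A_0t+\sum_l A_l\cos(b_lt)+B_l\sin(b_lt)$ and drifts to infinity along $A_0$, so no choice of $u$ brings $\beta_0(u)$ back near $p$. The paper's fix is to abandon the exact model and instead run the same perturbation argument in a neighbourhood of the curve $A_0r\cos(t)+A_1r\sin(t)+\sum_l A_l\cos(b_lt)+B_l\sin(b_lt)$ with $r$ sufficiently large --- in effect bending the axis of the generalized helix into a circle of very large radius. This curve again lies on a torus, so the recurrence argument applies, and for $r$ large its curvatures are within any prescribed tolerance of the $k_i$, which suffices because the lemma only demands $|\kappa_i^\beta(t)-k_i|<\varepsilon$ rather than exact equality. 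Your proof needs this (or an equivalent) additional idea to cover odd $n+1$.
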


To prove Lemma~\ref{l:3} we will use curves with
constant curvatures. The Frenet formulae for curves with constant curvature ratios were
integrated by J.~Monterde~\cite{JM}.  All curves in $\R^{2k}$ with constant
non-zero curvatures are of the form
\begin{equation}\label{eq:2}
   \alpha(t) = \sum_{l=1}^{k} A_l\cos(b_lt) + B_l\sin(b_lt),
\end{equation}
where each $b_l$ is the imaginary part of the $l$-th eigenvalue of the coefficient matrix for the system of the Frenet formulae, and
$\{A_1,\dots, A_k, B_1, \dots, B_k\}$ is a basis of $\R^{2k}$ with $|A_l|=|B_l|$ for all $l$. We refer to such curves as {\em helices}.

Note that the helix $\alpha(t)$ lies on a $k$-torus. It is either periodic or its image is dense on a subset of the $k$-torus which is an $s$-torus (see, for instance, \cite{K}). In particular, for any $\delta$ there is an arbitrarily large value of the parameter $u$ such that $\alpha(t)$, together with its first $n+1$ derivatives, is $\delta$-close at $t=u$ to $\alpha(t)$ at $t=0$.

The proof of Lemma~\ref{l:3} is now almost immediate.

\begin{proof}[Proof of Lemma~\ref{l:3}]
We shall look for the curve $\beta(t)$ in the form
\begin{equation}\label{curve}
   \beta(t)=\sum^{k}_{l=1} A_l[\cos(b_lt)+f_l(t)] + B_l[\sin(b_lt)+g_l(t)],
\end{equation}
where $f_l$ and $g_l$ are small functions $[0,u]\to \R$ for each $l=1,...,k$, which vanish in some neigbourhood of zero. Since the curve (\ref{eq:2}) passes arbitrarily close to the point $\alpha(0)$ at arbitrarily large values of $u$, there exists $u$ and a closed curve of the form  (\ref{curve}) which satisfies the two conditions of the lemma.

Consider now the space $R$ of all collections of functions $f_l(t), g_l(t): [0,\infty)\to \R$ with compact support not containing zero. We assume that all the functions have $n+1$ continuous derivatives and take the topology on $R$ as the product topology coming from the $C^{n+1}$-metric. Given $u\in (0,\infty)$, the evaluation map at $u$ is an open map $R\to \R^{2k}$.  The conditions of the lemma, applied to the curve (\ref{curve}) define an open subset  $V\subset R$. In particular, for $u$ such that there is a curve $\beta$ with $\beta(0)=\beta(u)=p$, the image of $V$ contains a neighbourhood of $p$ in $\R^{2k}$.

To prove the last statement of Lemma~\ref{l:3} note that the composition of a Frenet curve with a rotation in $\R^{2k}$ is also a Frenet curve. 

\end{proof}

The fact that the curvatures of a curve do not depend on the parametrization is reflected in the following lemma.
\begin{lemma}\label{l:1} Let $q\colon S^1\to S^1$ be a diffeomorphism, and $s$ a curvature-like function.
If $\alpha(t)$ is a holonomic $\varepsilon$-approximation of the function $s\circ q(t)$, then $\alpha\circ q^{-1}(t)$ is a holonomic $\varepsilon$-approximation of $s$.
\end{lemma}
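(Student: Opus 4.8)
The plan is to reduce the statement to the single geometric fact that the curvatures of a curve do not depend on its parametrization, and then to make a change of variable. Writing $\phi=q^{-1}$ and $\beta=\alpha\circ\phi$, I would first prove the precise form of this invariance, namely $j_\beta=j_\alpha\circ\phi$, i.e. $\kappa_i^\beta(\tau)=\kappa_i^\alpha(\phi(\tau))$ for every $i$ and every $\tau\in S^1$. To see this I would push the chain rule through the Frenet construction: by Fa\`a di Bruno's formula, $\beta^{(j)}(\tau)$ is a linear combination of $\alpha'(\phi(\tau)),\dots,\alpha^{(j)}(\phi(\tau))$ in which $\alpha^{(j)}(\phi(\tau))$ occurs with coefficient $(\phi'(\tau))^{j}$. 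Consequently, for each $j$ the span of the first $j$ derivatives of $\beta$ at $\tau$ equals the span of the first $j$ derivatives of $\alpha$ at $\phi(\tau)$; since $q$, and hence $\phi$, preserves orientation we have $\phi'>0$, so Gram-Schmidt returns the same positively oriented orthonormal frame, and the arc-length normalization built into the Frenet formulae yields identical curvatures at corresponding points.

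Granting this invariance, the lemma is a one-line substitution. The hypothesis states $|j_\alpha(t)-s(q(t))|<\varepsilon$ for all $t\in S^1$. Putting $t=q^{-1}(\tau)$ and using both $q(q^{-1}(\tau))=\tau$ and $j_{\alpha\circ q^{-1}}(\tau)=j_\alpha(q^{-1}(\tau))$, I obtain $|j_{\alpha\circ q^{-1}}(\tau)-s(\tau)|<\varepsilon$; because $q$ is a bijection, $\tau$ runs over all of $S^1$ as $t$ does, so the inequality holds for every $\tau$, which is exactly the assertion that $\alpha\circ q^{-1}$ is a holonomic $\varepsilon$-approximation of $s$.

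The one genuinely delicate point, and the step I would be most careful about, is the orientation hypothesis on $q$. Were $q$ orientation-reversing, the unit tangent of $\beta$ would point opposite to that of $\alpha$, flipping the entire Frenet frame and in particular reversing the last curvature, so that $\kappa_n^\beta(\tau)=-\kappa_n^\alpha(\phi(\tau))$ and the last component of the approximation would be destroyed. I would therefore state and apply the lemma only for orientation-preserving $q$, which suffices here since the reparametrizations used to concentrate a curvature-like function near a single subinterval of $S^1$ preserve orientation. Beyond this, the work is the routine chain-rule bookkeeping establishing that the curvatures are truly parametrization-independent.
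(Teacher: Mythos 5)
Your proof is correct and follows exactly the route the paper intends: the paper states Lemma~\ref{l:1} without proof, treating it as an immediate consequence of the parametrization-invariance of curvatures, which is precisely the fact you establish via the chain rule before making the substitution $t=q^{-1}(\tau)$. Your remark that $q$ must be orientation-preserving (lest the last curvature change sign) is a genuine point of care that the paper leaves implicit, and it is satisfied by the reparametrizations actually used in the proof of Theorem~\ref{th:1}.
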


Let $s$ be a curvature-like function and $\varepsilon>0$ a real number. Without loss of generality we can 
assume that there exists a very small neighbourhood $U$ on which $s$ is constant and where $s(t)=(k_1,\ldots,k_n)$ with $k_n \neq 0$.
 
\begin{figure}[ht]
	\centering
			\includegraphics[draft=false, width=100mm]{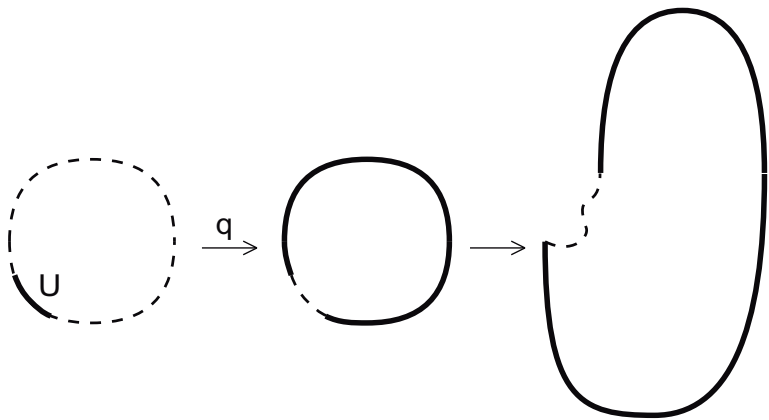}
\caption{The map $q$ and the concatenation of $\beta$ and $\gamma$.}
\label{fig:1}
\end{figure}

On the other hand, according to Lemma~\ref{l:1}, we do not need to think that $U$ is very small. In fact, for any $\delta>0$ we can choose a re-parametrization $q:S^1\to S^1$ so that if $\gamma$ is a curve $$(S^1-U)\to\R^{2k}$$ parametrized by arc length and such that $j_\gamma = s$ on $S^1-U$, then the length of $\gamma$ is smaller than $\delta$. In other words, we choose a re-parametrization of $S^1$ for which the length of $S^1-U$ is smaller than $\delta$.

In particular, for a given $\varepsilon$ we can choose $\delta>0$ as in Lemma~\ref{l:3} for the helix of the curvatures  $k_1,\ldots,k_n$. By Lemma~\ref{l:3} the two ends of $\gamma$ can be joined by a curve $\beta$ which, together with all its curvatures and derivatives of order $\le n+1$, is $\varepsilon$-close to such a helix. 
Adjust the parameter of this curve $\beta$, without affecting the derivatives near the ends,  so that it varies over the closure of $U$; then by construction we have that $j_{\beta}$ is $\varepsilon$-close to $s$ over $U$. 

The closed curve that coincides with $\gamma$ over $S^1-U$ and with $\beta$ over $U$ is a holonomic $\varepsilon$-approximation to $s$ everywhere except for the
end points of $\gamma$ and $\beta$ where it may not be differentiable.

\subsection{Improving piecewise differentiable holonomic approximations}

To begin with we observe that by Lemma~\ref{l:3} we may assume that the Frenet frames of $\gamma$ and $\beta$ coincide at one of the end points. Let $\tau\in S^1$ be the other endpoint of $\gamma$ and $\beta$.

\begin{lemma}  For any prescribed $\varepsilon'$ the number $\delta$ can be chosen so that the Frenet
frames of the curves $\gamma$ and $\beta$ are $\varepsilon'$-close at $\tau$.
\end{lemma}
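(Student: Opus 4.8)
The plan is to track how the two curves $\gamma$ and $\beta$ are constructed and to observe that both their Frenet frames at $\tau$ are controlled by $\delta$, so they must be close to each other when $\delta$ is small. We already know from the previous paragraphs that at one endpoint the Frenet frames of $\gamma$ and $\beta$ coincide exactly, by the last statement of Lemma~\ref{l:3}; the content of this lemma is the analogous statement at the \emph{other} endpoint $\tau$, where we have not imposed any matching condition by hand. So the strategy is to estimate how far each frame can drift as we traverse the short arcs.

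First I would bound the Frenet frame of $\gamma$ at $\tau$. Recall that $\gamma$ is parametrized by arc length and, by the re-parametrization supplied by Lemma~\ref{l:1}, its total length is smaller than $\delta$. Its curvatures $j_\gamma=s$ are bounded (say by a constant depending only on $s$), so the Frenet formulae say that the frame $\{e_1,\dots,e_{n+1}\}$ of $\gamma$ satisfies a linear ODE with bounded coefficients. Integrating this ODE over an arc of length $<\delta$ shows that the frame of $\gamma$ at $\tau$ differs from its frame at the starting endpoint by $O(\delta)$. In particular, by choosing $\delta$ small we make the frame of $\gamma$ at $\tau$ as close as we like to its frame at the other end.

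Next I would bound the frame of $\beta$ at $\tau$. By Lemma~\ref{l:3} the curve $\beta$ and all its derivatives of order $\le n+1$ are $\varepsilon$-close to the relevant helix, and moreover the second bullet of Lemma~\ref{l:3} guarantees that $|\beta^{(i)}(0)-\beta^{(i)}(u)|<\varepsilon$ for $i=1,\dots,n+1$; that is, the derivative data of $\beta$ at its two endpoints are close. Since the Frenet frame is produced from the tuple $(\beta',\dots,\beta^{(n)})$ by the Gram--Schmidt process, which is continuous (indeed smooth) on the set of linearly independent $(n)$-tuples away from degeneracy, closeness of the derivative data forces closeness of the Frenet frames of $\beta$ at its two endpoints. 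The degeneracy is not an issue because $\beta$ is a genuine Frenet curve with curvatures bounded away from the bad locus. The subtle point is that the bound in Lemma~\ref{l:3} is phrased in terms of the fixed tolerance $\varepsilon$, whereas here we want an arbitrarily small $\varepsilon'$; so I would re-run Lemma~\ref{l:3} with $\varepsilon$ replaced by a smaller tolerance $\varepsilon''$ chosen small enough that the resulting frame discrepancy at the endpoints is below $\varepsilon'$. Because $\delta$ in Lemma~\ref{l:3} depends on the chosen tolerance, shrinking the tolerance forces us to shrink $\delta$ as well, which is exactly the freedom the present lemma asserts.

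Finally I would combine the two estimates. At the \emph{first} endpoint the frames of $\gamma$ and $\beta$ agree exactly; the frame of $\gamma$ moves by $O(\delta)$ between the endpoints, and the frame of $\beta$ moves by at most $\varepsilon''<\varepsilon'/2$ (say) between its endpoints. By the triangle inequality, the frames of $\gamma$ and $\beta$ at $\tau$ differ by at most $O(\delta)+\varepsilon''$, and choosing $\delta$ small enough makes this smaller than $\varepsilon'$. The main obstacle, and the step requiring the most care, is the bookkeeping of which tolerance controls which frame: one must verify that shrinking $\delta$ simultaneously improves the $\gamma$-estimate directly (through the short length) and the $\beta$-estimate indirectly (through the dependence of the tolerance in Lemma~\ref{l:3} on $\delta$), so that a single choice of small $\delta$ delivers $\varepsilon'$-closeness of both frames at $\tau$.
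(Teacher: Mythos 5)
Your proposal is correct and follows essentially the same route as the paper: both arguments bound the drift of the frame of $\gamma$ by integrating the Frenet formulae over an arc of length less than $\delta$, bound the drift of the frame of $\beta$ via the endpoint-derivative estimate of Lemma~\ref{l:3}, and conclude by the triangle inequality using the exact agreement of the frames at the first endpoint. Your explicit bookkeeping of the tolerance in Lemma~\ref{l:3} (shrinking $\varepsilon$, hence $\delta$) is a point the paper leaves implicit, but it is the same argument.
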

\begin{proof}
If $\delta$ is sufficiently small, then by the last inequality in Lemma~\ref{l:3} the Frenet frames of $\beta$ at the end points are
$\varepsilon'/2$-close. On the other hand, if $\delta$ is
sufficiently small, then the Frenet frames of $\gamma$ at the initial and end
moments $t_1$ and $t_2$ are also $\varepsilon'/2$-close. Indeed,
\[
    e_1(t_2)=e_1(t_1) + \int_{t_1}^{t_2} \kappa_1^{\gamma}(t)e_2(t)dt \approx e_1(t_1),
\]
\[
    e_{i+1}(t_2)= e_{i+1}(t_1) + \int_{t_1}^{t_2} [\kappa_{i+1}^\gamma e_{i+2}
    - \kappa_i^\gamma e_i(t)]dt \approx e_{i+1}(t_1),
\]
for $i=1,...,n-1$, and
\[
    e_{n+1}(t_2)= e_{n+1}(t_1) - \int_{t_1}^{t_2} \kappa_{n}^\gamma e_{n}(t)dt \approx e_{n+1}(t_1).
\]

\end{proof}

Finally,  it is clear from the construction that we may assume that the curvatures $\kappa^{\gamma}_i$ are locally constant at $\tau$ and that the same is true for $\kappa^{\beta}_i$.  This condition, while not strictly necessary,  allows us to simplify the statement of Lemma~\ref{next} below.

\medskip

It only remains to prove that there exists an approximation of the concatenation of $\beta$ and $\gamma$ by a differentiable curve whose curvatures
is close to the curvatures for the concatenation of $\beta$ and $\gamma$ at all points of $S^1-\{\tau\}$.
Such curve can be obtained by using well-known tools such as the the standard mollifiers (see \cite[Appendix C.4]{E}). In particular, we have the following statement, which finishes our proof:
\begin{lemma}\label{next}
Let $\alpha:\R\to\R^{n+1}$ be a continuous curve which for $t>0$ and for $t<0$ coincides with helices of the curvatures $(k_1,\ldots,k_n)$. For each $\varepsilon>0$ there exists $\varepsilon'>0$ such that if the Frenet frames of $\alpha$ at $t\to-0$ and  at $t\to+0$ are $\varepsilon'$-close, then there exists a $C^{\infty}$-curve $\tilde{\alpha}$ 
with $$|\tilde{\alpha}-\alpha|_{C^{n+1}}<\varepsilon$$ in an arbitrarily small neighbourhood of 0 and  $\tilde{\alpha}=\alpha$ outside this neighbourhood.
\end{lemma}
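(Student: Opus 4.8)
The plan is to smooth the single corner of $\alpha$ at $t=0$ by a cut-off interpolation between the two helix branches, exploiting the fact that these branches are \emph{congruent}. First I would record what the hypothesis on the frames really buys. For $t<0$ and $t>0$ the curve $\alpha$ coincides with helices $\alpha_-,\alpha_+$ having the same curvatures $(k_1,\dots,k_n)$ and, by continuity, the common value $p=\alpha(0)$. By the fundamental theorem for Frenet curves, a helix is determined by its curvatures, an initial point and an initial orthonormal frame; since a rotation about $p$ preserves curvatures, the two branches differ exactly by the rotation $R$ about $p$ carrying the one-sided frame at $0^-$ to the one at $0^+$, and $\varepsilon'$-closeness of the frames means precisely $\|R-I\|\le C\varepsilon'$. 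Extending $\alpha_\pm$ analytically across $0$ to a fixed interval $[-\eta_0,\eta_0]$ and writing $\alpha_+(t)-\alpha_-(t)=(R-I)(\alpha_-(t)-p)$, I obtain
\[
  |\alpha_+-\alpha_-|_{C^{n+1}([-\eta_0,\eta_0])}\le C'\varepsilon',
\]
where $C'$ depends only on the curvatures and on $\eta_0$, and in particular is \emph{independent} of the size of the final neighbourhood. This uniform bound is the crucial point: differentiating the difference of the branches only differentiates the fixed helix $\alpha_-(t)-p$, so no negative power of the neighbourhood radius enters here.

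Next I would build $\tilde\alpha$. Fixing the desired (arbitrarily small) radius $\eta\le\eta_0$ and a smooth cut-off $\chi$ equal to $1$ on $(-\infty,-\eta]$ and to $0$ on $[\eta,\infty)$, I set $\tilde\alpha=\chi\,\alpha_-+(1-\chi)\,\alpha_+$. This is $C^\infty$; for $t\le-\eta$ it equals $\alpha_-=\alpha$ and for $t\ge\eta$ it equals $\alpha_+=\alpha$, so $\tilde\alpha=\alpha$ outside $(-\eta,\eta)$. On $(-\eta,\eta)$ one writes $\tilde\alpha-\alpha_+=\chi(\alpha_--\alpha_+)$ and estimates by the Leibniz rule; together with the analogous comparison against $\alpha_-$ this gives
\[
  |\tilde\alpha-\alpha|_{C^{n+1}}\ \le\ C''\,\Big(\max_{j\le n+1}\|\chi^{(j)}\|_\infty\Big)\,|\alpha_+-\alpha_-|_{C^{n+1}}\ \le\ C''\,\eta^{-(n+1)}\,C'\,\varepsilon'.
\]
Choosing $\varepsilon'<\varepsilon\,\eta^{n+1}/(C'C'')$ makes the right-hand side smaller than $\varepsilon$, which is the asserted bound.

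The \textbf{main obstacle} is exactly the factor $\eta^{-(n+1)}$ produced by differentiating the cut-off $\chi$, which varies on scale $\eta$: rounding a genuine corner inside a neighbourhood of radius $\eta$ necessarily amplifies the $(n+1)$-st derivative by $\eta^{-(n+1)}$. The construction survives this only because of the uniform estimate above, which lets us absorb the amplification by taking $\varepsilon'$ small relative to a fixed power of $\eta$; the freedom to do so is supplied in the application by the preceding lemma, where $\varepsilon'$ can be forced down to any prescribed value by shrinking $\delta$. Finally, since $\tilde\alpha$ is $C^{n+1}$-close to the Frenet curve $\alpha$, for $\varepsilon$ small it is again a regular Frenet curve whose curvatures are close to those of $\alpha$, by the observation that $C^{n+1}$-close curves have close curvatures.
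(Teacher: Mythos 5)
Your proof is correct, and it is genuinely more explicit than what the paper offers: the paper does not actually prove this lemma, remarking only that the smoothing ``can be obtained by using well-known tools such as the standard mollifiers.'' Your route --- using the rigidity of constant-curvature curves to write $\alpha_+(t)-\alpha_-(t)=(R-I)(\alpha_-(t)-p)$ with $\|R-I\|\lesssim\varepsilon'$, so that the two helix branches are $C^{n+1}$-close on a fixed interval with a constant independent of the smoothing scale, and then interpolating with a cut-off --- supplies precisely the estimate that a bare appeal to mollifiers leaves implicit. Mollifying a curve whose first derivative jumps by $\sim\varepsilon'$ at scale $\eta$ produces the same $\eta^{-j}$ amplification of higher derivatives that your Leibniz computation exhibits, so the uniform bound on $|\alpha_+-\alpha_-|_{C^{n+1}}$ over a fixed interval is the real content of the lemma either way; you make it explicit, which is a service to the reader.

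One point deserves attention. Your $\varepsilon'$ depends on the radius $\eta$ of the neighbourhood, whereas the lemma, read literally, fixes $\varepsilon'$ before the ``arbitrarily small'' neighbourhood is chosen. This dependence is unavoidable: if the one-sided frames are genuinely $\varepsilon'$ apart, then $\tilde{\alpha}'-\alpha_+'$ must change by roughly $\varepsilon'$ across an interval of length $2\eta$, so $\sup|\tilde{\alpha}''-\alpha''|\gtrsim \varepsilon'/\eta$ for \emph{any} modification supported in $(-\eta,\eta)$; hence no single $\varepsilon'$ can serve all $\eta$ simultaneously. The statement must therefore be read with the neighbourhood fixed before $\varepsilon'$ is chosen --- which is all the application requires, since one only needs the modification to take place in some neighbourhood contained in $U$ --- and under that reading your argument is complete. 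It would be worth recording this quantifier order explicitly, as you do in identifying the $\eta^{-(n+1)}$ factor as the main obstacle.
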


\subsection{The case $n=2k$}

As shown in \cite{JM}, all curves in $\R^{2k+1}$ with constant curvatures are of the form 
\[
   \alpha(t) = A_0t+\sum_{l=1}^{k} A_l\cos(b_lt) + B_l\sin(b_lt), 
\]
where each $b_l$ is the imaginary part of the $l$-th eigenvalue of the coefficient matrix of the system of the
Frenet formulae, and
$\{A_0,\dots, A_k, B_1, \dots, B_k\}$ is a basis of $\R^{2k+1}$ with $|A_l|=|B_l|$ for all $l$. Unlike the helices in $\R^{2k}$, these curves do not 
lie on tori but on cylinders on which they are not dense. Nevertheless, Lemma~\ref{l:3} is still true and the proof is the same as that for curves in $\R^{2k}$ except 
that we find an approximation in a neighbourhood of the curve
\[
   A_0r\cos(t) + A_1r\sin(t) +\sum_{l=1}^{k} A_l\cos(b_lt) + B_l\sin(b_lt), 
\]
where $r$ is a sufficiently big real number. The rest of the proof of Theorem~\ref{th:1} is also virtually unchanged as compared to the case of curves in $\R^{2k}$.

\end{document}